\newtheorem{thm}{Theorem}
\newtheorem{lem}[thm]{Lemma}
\newtheorem{prop}[thm]{Proposition}
\theoremstyle{definition}
\newtheorem{rem}[thm]{Remark}
\newcommand{\Z}{\mathbb{Z}}
\newcommand{\R}{\mathbb{R}}
\newcommand{\allone}{\mathbf{1}}
\begin{document}
\title{Large Regular Simplices Contained in a Hypercube with a Common Barycenter}
\author[H. Tamura]{Hiroki Tamura}
\address{Graduate School of Information Sciences,
Tohoku University, Sendai, 980-8579 Japan}
\email{tamura@ims.is.tohoku.ac.jp}
\date{January 14, 2011}

\begin{abstract}
It has been shown that the $n$-dimensional unit hypercube contains an
$n$-dimensional regular simplex of edge length $c\sqrt n$ for arbitrary
$c<1/2$ if $n$ is sufficiently large (Maehara, Ruzsa and Tokushige, 2009).
We prove the same statement holds for some $c>1/2$ even in the special case where
a regular simplex has the same barycenter as that of the unit hypercube.
\end{abstract}

\maketitle

\section{Introduction}
Let $\ell\Delta_n$ be an $n$-dimensional regular simplex of edge length $\ell$,
and let $Q_n=[-1/2,1/2]^n$ be the $n$-dimensional unit hypercube.
In \cite{MRT}, the problem of finding the largest size of an
$n$-dimensional regular which can be contained in the unit hypercube,
is considered.
For a lower bound, they assert that for every $\epsilon_0>0$
there is an $N_0$ such that for every $n>N_0$ one has
$((1-\epsilon_0)/2)\sqrt n\Delta_n\subset Q_n.$
Here, ``$\ell\Delta_n\subset Q_n$'' means that an isometric copy of $\ell\Delta_n$
is contained in $Q_n$.
For the upper bound, if $\ell\Delta_n\subset Q_n$, then $\ell\leq \sqrt{(n+1)/2}$,
and the equality holds if and only if there exists a Hadamard matrix of order $n+1$
\cite{Sch}.

In this paper, we restrict the case where
a regular simplex has the same barycenter as that of the unit hypercube.
In this situation, the above statement for the upper bound
is still valid, and moreover for a lower bound, we improve the above assertion
by inductive construction from Hadamard matrices.
By ``$\ell\Delta_{n,0}\subset Q_n$'' %(resp. $\ell Q_{n,0}$)
, we mean that an isometric copy of $\ell\Delta_n$ %(resp. $\ell Q_n$)
with barycenter at the origin is contained in $Q_n$. Then we have the following result.
\begin{thm}\label{thm:main}
For every $n$ one has
$$\left(\frac{\sqrt{336}-4-\sqrt 2}{\sqrt{664}}\sqrt n\right)\Delta_{n,0}\subset Q_n.$$
\end{thm}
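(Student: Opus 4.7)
The plan is to construct, for each $n$, an explicit $(n+1)\times n$ real matrix $V=(v_0,\ldots,v_n)^{T}$ with entries in $[-1/2,1/2]$, vanishing column sums (so $\sum_i v_i=\mathbf{0}$), and Gram matrix $V V^{T}=(\ell^{2}/2)\bigl(I-J/(n+1)\bigr)$; the rows of $V$ are then the vertices of a centered regular simplex $\ell\Delta_{n,0}\subset Q_{n}$, and it suffices to show $\ell\ge c\sqrt n$ with $c=(\sqrt{336}-4-\sqrt2)/\sqrt{664}$. The overall strategy is inductive, with small Hadamard matrices as building blocks. Whenever $n+1$ is a Hadamard order (so in particular for $n+1\in\{2,4,8,12,\ldots\}$), the classical construction from \cite{Sch}---take $V$ to be the last $n$ columns of a normalized $(n+1)\times(n+1)$ Hadamard matrix, scaled by $1/2$---produces the extremal $\ell=\sqrt{(n+1)/2}>c\sqrt n$, which anchors the induction on an infinite set of dimensions.

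For the remaining $n$, I plan a padded/block construction. Fix a Hadamard matrix $H$ of small fixed order $h$ (e.g.\ $h=4$) and, by induction, a centered regular simplex $V'$ in $Q_{n'}$ of edge length $\ell_{n'}$, choosing $n'<n$ so that the block assembly of $V'$ and $H$ produces precisely $n+1$ vertices in $\R^n$. The new vertex matrix will have the schematic form
\[
V=\lambda\cdot(\text{Kronecker-type contribution from }V')\;\oplus\;\mu\cdot(\text{block contribution from }H),
\]
with free weights $\lambda,\mu$ governing the trade-off between the two parts. A direct computation should confirm that the rows of $V$ form a centered regular simplex in $\R^{n}$ whose edge length is an explicit function of $\lambda,\mu,\ell_{n'}$ and $h$. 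Maximizing this over $(\lambda,\mu)$ subject to $\|V\|_{\infty}\le 1/2$ gives a recurrence for $\ell_n$; iterating it together with the Hadamard base cases should yield the uniform bound $\ell_n\ge c\sqrt n$, with the specific constant (whose structure $\sqrt{336}=4\sqrt{21}$ and $\sqrt{664}=2\sqrt{166}$ points to a small Hadamard order such as $h=4$ combined with a specific residual case) emerging as the worst case of the optimization.

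The main obstacle I anticipate is the tight interplay between three competing requirements: the Gram-matrix identity characterizing a \emph{regular} simplex, the entrywise box constraint $|V_{ij}|\le 1/2$ placing the vertices in $Q_n$, and the column-sum condition placing the barycenter at the origin. In the pure Hadamard case these three are satisfied simultaneously with no slack; in the padded construction they pull against one another, and pushing past the threshold $c=1/2$---and doing so for \emph{every} $n$, not just asymptotically as in \cite{MRT}---forces the weights $\lambda,\mu$ into a narrow regime where the box constraint is almost saturated. Carefully tracking the induction through this regime is what should produce a constant $c$ strictly larger than $1/2$.
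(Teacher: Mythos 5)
There is a genuine gap here: what you have is a plan whose central construction is left unspecified, and the two mechanisms that actually make the theorem work are missing from it. First, the ``block assembly'' $\lambda\cdot(\text{contribution from }V')\oplus\mu\cdot(\text{contribution from }H)$ is not shown to produce a \emph{regular} simplex, and generically it will not: placing a scaled copy of a regular $n'$-simplex in some coordinates and a Hadamard block in the others makes the squared distance between two vertices depend on whether they differ in the first block, the second, or both, so the Gram identity $VV^T=(\ell^2/2)(I-J/(n+1))$ fails unless the combinatorics align exactly. The paper's way around this is to work with orthogonal matrices $A\in\hat O_{n+1}$ (first column $\allone/\sqrt{n+1}$), for which only two moves are available and both are proved to preserve the structure: a Kronecker doubling $A\mapsto B_2\otimes A$, which gives exactly $f_0(2n+1)\ge\sqrt2 f_0(n)$, and a delicate one-coordinate deletion (Lemma~\ref{lem:n-1}) giving $f_0(n-1)>f_0(n)-1/\sqrt2$. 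The deletion step is the real content --- it is what lets one descend from a Hadamard order $n+k$ to an arbitrary $n$ with a quantified loss of $(k-1)/\sqrt2$ --- and nothing in your sketch plays its role. Without it, the dimensions reachable from Hadamard base cases by doubling form a sparse set, and you cannot cover every $n$.

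Second, your reading of the constant is wrong, and this matters because the constant cannot ``emerge as the worst case of the optimization'' over a block construction with $h=4$. The numbers $336$ and $664$ come from an external fact: Hadamard matrices are known for every order $4m\le 664$, and $668$ is the smallest order for which none is known. This gives $f_0(n)>(\sqrt{n+4}-3)/\sqrt2$ for all $n\le 663$, which supplies base cases on the full dyadic window $332\le n\le 663$; the doubling-plus-deletion induction then propagates the bound to all $n\ge 332$ at the cost of an accumulated geometric loss of $(1+\sqrt2)/\sqrt{332}$, whence $(\sqrt{336}-3)/\sqrt{664}-(1+\sqrt2)/\sqrt{664}=(\sqrt{336}-4-\sqrt2)/\sqrt{664}$. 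You also need a separate argument for moderate $n$ (roughly $202\le n<332$), where the elementary trigonometric construction giving $f_0(n)\ge\sqrt{n+1}/2$ no longer beats $c\sqrt n$ and one must again invoke actual Hadamard matrices of nearby orders. To repair the proposal you would need to (a) replace the vague block assembly with a proved doubling identity and a proved dimension-reduction lemma with explicit loss, and (b) bring in the table of known Hadamard orders up to 664 as the source of the base cases.
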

Note that $(\sqrt{336}-4-\sqrt{2})/\sqrt{664}=0.5012\cdots$,
and thus this theorem improves the result of \cite{MRT}.

\section{Proof of the main result}
We denote the $n\times n$ all-one matrix by $J_n$,
and the all-one vector of length $n$ by $\allone_n$.
For a matrix (or a vector) $A=(a_{ij})$, we define its norm by
$\|A\|=\max_{ij}|a_{ij}|$.

Let
\begin{align*}
f(n)&:=\max\{\ell\mid\ell\Delta_n\subset Q_n\},\quad \text{and}\\
f_0(n)&:=\max\{\ell\mid\ell\Delta_{n,0}\subset Q_n\}.
\end{align*}
Then we have
\begin{equation}
\label{eq:upper}
f_0(n)\leq f(n)\leq \sqrt{\frac{n+1}{2}},
\end{equation}
and $f_0(n)=\sqrt{(n+1)/2}$ holds if and only if
there exists a Hadamard matrix of order $n+1$ \cite{Sch}.

Recall that the barycenter and the circumcenter of a regular simplex coincide,
and the circumradius of $\Delta_n$ is $\sqrt{n/(2n+2)}$.
From this fact, $\sqrt{2}\Delta_{n-1}$ with barycenter at the origin
corresponds to an orthogonal matrix with the following form.
\[
A=\begin{pmatrix}\frac{1}{\sqrt{n}}&p_1\\
\vdots&\vdots\\ \frac{1}{\sqrt{n}}&p_n\end{pmatrix},
\]
where $p_1,\dots,p_n$ are the vertices of $\sqrt{2}\Delta_{n-1}$.
Let $\hat{O}_n$ be the set of all $n\times n$ real orthogonal matrices
with the first column $(1/\sqrt{n})\allone_n^T$.
We denote by $A_1$ the matrix obtained by deleting the first column of $A\in\hat{O}_n$. 
Then we have
$$\hat{O}_n=\{(\frac{1}{\sqrt n}\allone_n^T\;P)\mid 
\text{row vectors of }\frac{1}{\sqrt{2}}P\text{ are the vertices of }\Delta_{n-1,0}\},$$
and $\|A\|=\|A_1\|$ holds for $A\in\hat{O}_n$.
Thus we have the following.
\begin{lem}\label{lem:orth}
$$f_0(n-1)=\frac{1}{\sqrt 2}\max_{A\in\hat{O}_n}\frac{1}{\|A\|}.$$
\end{lem}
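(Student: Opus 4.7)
The lemma asserts a precise correspondence between the problem of fitting a centered regular simplex into the unit hypercube and that of minimising the sup-norm of an orthogonal matrix with a prescribed first column. The infrastructure is already laid out in the paragraph immediately above the lemma, so the proof is really just a matter of unwinding both directions of this correspondence and keeping careful track of the scaling factor.

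For the direction $f_0(n-1)\geq\frac{1}{\sqrt 2}\max_{A\in\hat{O}_n}1/\|A\|$, I would start from an arbitrary $A=(\frac{1}{\sqrt n}\allone_n^T\;P)\in\hat{O}_n$. By the identification recalled just above, the rows of $P$ are the vertices of $\sqrt 2\,\Delta_{n-1,0}$, so for $\ell>0$ the rows of $(\ell/\sqrt 2)P$ form the vertices of an isometric copy of $\ell\Delta_{n-1,0}$ centered at the origin. This copy is contained in $Q_{n-1}$ precisely when $\|(\ell/\sqrt 2)P\|\leq 1/2$, equivalently $\ell\leq 1/(\sqrt 2\,\|P\|)=1/(\sqrt 2\,\|A\|)$. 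Choosing $\ell$ equal to this bound yields $f_0(n-1)\geq 1/(\sqrt 2\,\|A\|)$, and then one maximises over $A\in\hat{O}_n$.

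For the reverse inequality, I would begin with an isometric copy of $\ell\Delta_{n-1,0}$ sitting inside $Q_{n-1}$ and assemble its vertices as the rows of an $n\times(n-1)$ matrix $V$. The matrix $P:=(\sqrt 2/\ell)V$ then has rows forming the vertices of $\sqrt 2\,\Delta_{n-1,0}$, so prepending $(1/\sqrt n)\allone_n^T$ produces a matrix $A=(\frac{1}{\sqrt n}\allone_n^T\;P)\in\hat{O}_n$. The containment $V\subset Q_{n-1}$ is just $\|V\|\leq 1/2$, which rewrites as $\|A\|=\|P\|\leq 1/(\sqrt 2\,\ell)$, i.e.\ $\ell\leq 1/(\sqrt 2\,\|A\|)$. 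Taking the supremum over all such embeddings gives the other inequality and completes the argument.

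There is no real analytic obstacle beyond bookkeeping: one must remember the factor $\sqrt 2$ relating a unit-norm row of an orthogonal matrix to a vertex of $\sqrt 2\,\Delta_{n-1,0}$, and verify the identity $\|A\|=\|A_1\|$ used tacitly above. The latter follows from the observation that each row $p_i$ of $A_1$ has Euclidean norm $\sqrt{(n-1)/n}$, hence $\|p_i\|_\infty\geq\sqrt{(n-1)/n}/\sqrt{n-1}=1/\sqrt n$, so the first column $(1/\sqrt n)\allone_n^T$ contributes nothing to the sup-norm of $A$. With this in place, the two inequalities above combine to give the claimed equality.
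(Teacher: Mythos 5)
Your proposal is correct and follows essentially the same route as the paper: both unwind the bijection between matrices in $\hat{O}_n$ and scaled copies of $\Delta_{n-1,0}$, reduce the containment in $Q_{n-1}$ to the condition $\|\tfrac{\ell}{\sqrt 2}A_1\|\leq\tfrac12$, and maximise. Your explicit verification that $\|A\|=\|A_1\|$ (via $\|p_i\|_\infty\geq 1/\sqrt n$) is a detail the paper merely asserts, but the argument is otherwise identical.
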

\begin{proof}
\begin{align*}
f_0(n-1)&=\max\{\ell\mid\ell\Delta_{n-1,0}\subset Q_{n-1}\}\\
&=\max_{A\in\hat{O}_n}\max\{ \ell \mid \|\frac{\ell}{\sqrt{2}}A_1\|\leq\frac12 \}\\
&=\max_{A\in\hat{O}_n}\frac{1}{\sqrt{2}\|A_1\|}
=\frac{1}{\sqrt{2}}\max_{A\in\hat{O}_n}\frac{1}{\|A\|}.
\end{align*}
\end{proof}

\begin{lem}\label{lem:Fourier}
Let
$$A=\begin{cases}
\sqrt{\frac{2}{n}}\begin{pmatrix}
\frac{1}{\sqrt 2}\allone_n^T&\frac{1}{\sqrt 2}v^T&C&S\end{pmatrix}&(n\text{: even}),\\
\sqrt{\frac{2}{n}}\begin{pmatrix}
\frac{1}{\sqrt 2}\allone_n^T&C&S\end{pmatrix}&(n\text{: odd}),\end{cases}$$
where $C=(c_{ij})$ and $S=(s_{ij})$ are $n\times\lfloor (n-1)/2\rfloor$ matrices
with $c_{ij}=\cos(\theta_j+2ij\pi/n)$ and $s_{ij}=\sin(\theta_j+2ij\pi/n)$ respectively,
and $v=(v_i)$ is the vector of length $n$ with $v_i=(-1)^i$.
Then $A\in\hat{O}_n$ for arbitrary $\theta_j$'s.
\end{lem}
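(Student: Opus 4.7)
The plan is to verify the two defining conditions of $\hat O_n$: that the first column of $A$ equals $(1/\sqrt n)\allone_n$, and that $A^T A=I_n$. The first is immediate from $\sqrt{2/n}\cdot(1/\sqrt 2) = 1/\sqrt n$. The substance is orthonormality, which I would extract from the root-of-unity identity
\[
\sum_{i=1}^{n} e^{2\pi i m i/n} = \begin{cases} n & \text{if } m \equiv 0 \pmod n,\\ 0 & \text{otherwise.}\end{cases}
\]

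The cleanest way to organize the calculation is to bundle each cosine/sine pair into one complex column. Set $\omega = e^{2\pi i/n}$ and $u_j := e^{i\theta_j}(\omega^{j}, \omega^{2j}, \dots, \omega^{nj})^T$ for $1 \le j \le \lfloor(n-1)/2\rfloor$, so that after stripping the factor $\sqrt{2/n}$, the $j$-th cosine and sine columns of $A$ are precisely the real and imaginary parts $c_j, s_j$ of $u_j$. The two identities
\[
u_j^{T} u_k = e^{i(\theta_j+\theta_k)}\sum_i \omega^{(j+k)i}, \qquad u_j^{*} u_k = e^{i(\theta_k-\theta_j)}\sum_i \omega^{(k-j)i},
\]
expanded into real and imaginary parts, recover every inner product among the cosine and sine columns in a uniform way. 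For distinct $j, k \in \{1, \dots, \lfloor(n-1)/2\rfloor\}$, both $j+k$ and $k-j$ lie in $(-n, n)\setminus\{0\}$; for $j = k$ one needs $2j \not\equiv 0 \pmod n$, which holds because $2j \le n-1$ if $n$ is odd and $2j \le n-2$ if $n$ is even. The DFT identity therefore kills every such sum, giving $\|c_j\|^2 = \|s_j\|^2 = n/2$, $\langle c_j, s_j\rangle = 0$, and pairwise orthogonality for distinct indices.

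The remaining inner products involve $\allone_n$ and, when $n$ is even, $v$. The products $\langle \allone_n, c_j\rangle$ and $\langle \allone_n, s_j\rangle$ are the real and imaginary parts of $\sum_i u_{ij}$, a DFT sum with $m = j \not\equiv 0 \pmod n$. For $v$, I would exploit the identity $(-1)^i = \omega^{(n/2)i}$, so that $\langle v, \allone_n\rangle$ becomes the DFT sum with $m = n/2$, and $\langle v, c_j \pm i s_j\rangle$ becomes a sum with $m = n/2 \pm j$; each such $m$ lies strictly between $0$ and $n$ since $1 \le j \le n/2 - 1$ in the even case, so all these sums vanish.

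The individual computations are routine; the main obstacle is just bookkeeping. One has to keep the column types ($\allone_n$, $v$, $c_j$, $s_j$) and all pairings straight, and confirm in each case that the residue $m$ appearing in the DFT identity is nonzero modulo $n$. Once the complex reformulation is in place, every case is handled by the same geometric-series fact, and the even/odd dichotomy is absorbed by noting $\lfloor(n-1)/2\rfloor < n/2$ together with the fact that the $v$ column is present only when $n$ is even.
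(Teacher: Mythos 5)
Your proof is correct and matches the paper's approach: the paper simply says the lemma "follows from direct calculation of $A^TA$," and your argument is exactly that calculation, organized cleanly via the geometric-series identity for roots of unity. All the residues $m$ you need to be nonzero modulo $n$ (namely $j$, $2j$, $j\pm k$, $n/2$, and $n/2\pm j$) are correctly checked against the bound $\lfloor(n-1)/2\rfloor$, so the verification goes through.
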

\begin{proof}
Follows from direct calculation of $A^TA$.
\end{proof}
\begin{rem}
Let $T$ be a non-empty subset of $\{1,\dots,\lfloor (n-1)/2\rfloor\}$,
and let $W$ be the submatrix of the above $A$
consisting of the $k$th column of $\sqrt{2/n}C$
and the $k$th column of $\sqrt{2/n}S$ for $k\in T$,
If $n$ is even, let $W'$ be the matrix $W$ with
the column $\sqrt{1/n}v^T$ adjoined.
Then the set of row vectors of $W$ (resp. $W'$) form a spherical 2-design in $\R^{2|T|}$ (resp. $\R^{2|T|+1}$).
The matrices $W$ and $W'$ in the special case where $\theta_j=0$  
for all $j$,
appeared as a construction of spherical $2$-designs in \cite{Mi}.
\end{rem}

\begin{prop}\label{prop:Fourier}
For every $n$, we have $f_0(n)\geq\sqrt{n+1}/2$.
\end{prop}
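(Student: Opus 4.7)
The plan is to combine Lemmas~\ref{lem:orth} and~\ref{lem:Fourier} directly, using the Fourier matrix of the second lemma as a witness for the extremum in the first.

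First, I would rewrite the target inequality in the form suggested by Lemma~\ref{lem:orth}. Applying that lemma with $n$ replaced by $n+1$, the statement $f_0(n)\geq\sqrt{n+1}/2$ is equivalent to exhibiting a matrix $A\in\hat{O}_{n+1}$ with
\[
\|A\|\leq \sqrt{\tfrac{2}{n+1}},
\]
since this would give $f_0(n)=\tfrac{1}{\sqrt 2}\max_{A\in\hat{O}_{n+1}}\tfrac{1}{\|A\|}\geq \tfrac{1}{\sqrt 2}\cdot\sqrt{(n+1)/2}=\sqrt{n+1}/2$.

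Next, I would take as candidate $A$ precisely the matrix produced by Lemma~\ref{lem:Fourier} (applied with parameter $n+1$ in place of $n$), with the free angles $\theta_j$ chosen arbitrarily, say $\theta_j=0$. Every entry of $A$ is then one of the following (with $n+1$ in the role of $n$ in that lemma): an entry of $\tfrac{1}{\sqrt{n+1}}\allone_{n+1}^T$, an entry of $\tfrac{1}{\sqrt{n+1}}v^T$ (only when $n+1$ is even), or an entry of $\sqrt{2/(n+1)}\,C$ or $\sqrt{2/(n+1)}\,S$. In the first two cases the absolute value is exactly $1/\sqrt{n+1}\leq\sqrt{2/(n+1)}$, and in the last two cases $|c_{ij}|,|s_{ij}|\leq 1$ forces the absolute value to be at most $\sqrt{2/(n+1)}$. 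Hence $\|A\|\leq\sqrt{2/(n+1)}$, which is exactly the bound needed.

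Since Lemma~\ref{lem:Fourier} guarantees $A\in\hat{O}_{n+1}$ for any $\theta_j$, the proof is then complete by the reformulation in the first step. There is essentially no obstacle here: the whole content of the proposition sits in the observation that the trigonometric entries $\cos(\cdot),\sin(\cdot)$ are bounded by $1$, together with the lucky coincidence that the normalizing factor $\sqrt{2/(n+1)}$ in Lemma~\ref{lem:Fourier} precisely matches the threshold $\sqrt{2/(n+1)}$ demanded by Lemma~\ref{lem:orth}. The only minor subtlety is bookkeeping the index shift from $f_0(n-1)$ in Lemma~\ref{lem:orth} to $f_0(n)$ in the statement.
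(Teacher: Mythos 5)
Your proof is correct and is essentially the paper's own argument: the paper likewise takes the matrix of Lemma~\ref{lem:Fourier}, observes $\|A\|\leq\sqrt{2/n}$ since every entry is a bounded multiple of $\sqrt{2/n}$, and applies Lemma~\ref{lem:orth} to get $f_0(n-1)\geq\sqrt{n}/2$. Your version merely spells out the entrywise bounds and the index shift more explicitly.
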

\begin{proof}
The matrix $A$ given in Lemma~\ref{lem:Fourier} satisfies
$\|A\|\leq\sqrt{2/n}$. Then
Lemma~\ref{lem:orth} implies $f_0(n-1)\geq\sqrt{n}/2$. 
\end{proof}
\begin{rem}
The lower bound in Proposition~\ref{prop:Fourier} is already better
than that of \cite{MRT},
and it can be slightly improved by choosing a matrix $A\in\hat{O}_n$ carefully.
Indeed, $\|A\|$ is minimized when we set
$$\theta_j=\begin{cases}\pi/n&(n\equiv 0\pmod 4)\\
\pi/4&(\text{otherwise})\end{cases}$$
for all $j$,
and thus
$$f_0(n-1)\geq\begin{cases}\displaystyle\frac{\sqrt{n}}{2\cos(\pi/n)}&(n\equiv 0\pmod 4)\\
\displaystyle\frac{\sqrt{n}}{2\cos(\pi/2n)}&(n\equiv 2\pmod 4)\\
\displaystyle\frac{\sqrt{n}}{2\cos(\pi/4n)}&(\text{otherwise}).\end{cases}$$
\end{rem}
To improve the lower bound more for large $n$,
we use inductive construction of orthogonal matrices from Hadamard matrices.

\begin{lem}\label{lem:n-1}
Suppose that $A\in\hat{O}_{n+1}$ has a form
$$A=\begin{pmatrix}\frac{1}{\sqrt{n+1}}&a&u\\
\begin{matrix}\vdots\\ \frac{1}{\sqrt{n+1}}\end{matrix}&v^T&X\end{pmatrix}.$$
Then
$$\widetilde{A}=\begin{pmatrix}\displaystyle
\begin{matrix}\frac{1}{\sqrt n}\\ \vdots\\\frac{1}{\sqrt n}\end{matrix}&
X+(\sqrt{\frac{n}{n+1}}+a)^{-1}(-v^T+\frac{1}{\sqrt{n(n+1)}}\allone_n^T)u\end{pmatrix}\in\hat{O}_n.$$
\end{lem}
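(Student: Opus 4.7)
The plan is to exploit the block form of the relation $A^TA = I_{n+1}$ to extract a handful of identities, and then verify the two conditions for $\widetilde A\in\hat O_n$: namely that its first column is $(1/\sqrt n)\allone_n^T$ (trivial from the definition of $\widetilde A$) and that its remaining columns form an orthonormal system whose sum of entries in each column is zero.

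First I would write out $A^TA = I_{n+1}$ block-wise against the decomposition in the statement. Treating $v$ as a row vector (so $v^T$ is the column appearing in $A$) and $u$ as a row vector, the relations one extracts are
\begin{align*}
a^2 + vv^T &= 1,\\
a + \allone_n v^T &= 0,\\
au + vX &= 0,\\
u + \allone_n X &= 0,\\
X^TX &= I_{n-1} - u^Tu.
\end{align*}
Set $s := \sqrt{n/(n+1)}$, $\alpha := (s+a)^{-1}$, $\beta := 1/\sqrt{n(n+1)} = s/n$, and $r := -v^T + \beta\allone_n^T$, so that the nontrivial block of $\widetilde A$ is $B := X + \alpha r u$.

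The condition that $\allone_n$ is orthogonal to every column of $B$ becomes $\allone_n B = 0$. Using $\allone_n X = -u$, $\allone_n v^T = -a$, and $\allone_n\allone_n^T = n$, one gets $\allone_n B = (-1 + \alpha(a + n\beta))u$, and the identity $n\beta = s$ makes the bracket vanish. This is the easy half.

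The heart is to show $B^TB = I_{n-1}$. A short calculation using $\allone_n X = -u$ and $vX = -au$ gives $r^TX = (a-\beta)u$, hence
\begin{align*}
B^TB &= X^TX + \alpha X^Tru + \alpha u^Tr^TX + \alpha^2(r^Tr)\,u^Tu\\
&= I_{n-1} + \bigl(-1 + 2\alpha(a-\beta) + \alpha^2 r^Tr\bigr)u^Tu.
\end{align*}
So it remains to check that the scalar coefficient is zero, equivalently
\[
r^Tr = (s+a)\bigl((s+a) - 2(a-\beta)\bigr) = (s+a)(s-a+2\beta).
\]
Expanding $r^Tr = vv^T - 2\beta\allone_n v^T + n\beta^2 = (1-a^2) + 2a\beta + n\beta^2$ and using $n\beta^2 = s^2/n = 1/(n+1)$ and $s^2 = n/(n+1)$, both sides simplify to $(n+2)/(n+1) - a^2 + 2as/n$, completing the verification. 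The only obstacle is the bookkeeping in this last step; there is a tacit assumption that $s+a\neq 0$ so that $\alpha$ is defined, which I would note explicitly.
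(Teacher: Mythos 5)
Your proof is correct and takes essentially the same route as the paper's: a direct verification of orthogonality from the block identities implied by the orthogonality of $A$, except that you work with $A^TA=I_{n+1}$ and check $\widetilde A^T\widetilde A=I_n$, whereas the paper extracts its identities from $AA^T=I_{n+1}$ and checks $\widetilde A\widetilde A^T=I_n$ (and leaves the final computation to the reader). Your remark that $\sqrt{n/(n+1)}+a\neq 0$ must be assumed for $\widetilde A$ to be defined is a legitimate point the lemma glosses over; the paper only addresses it downstream, in Proposition~\ref{prop:f0}(ii), by flipping the sign of the second column to force $a\geq 0$.
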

\begin{proof}
Since $AA^T=I_{n+1}$, we have
\begin{align*}
a^2+|u|^2&=\frac{n}{n+1},\\
av+uX^T&=-\frac{1}{n+1}\allone_n,\\
v^Tv+XX^T&=I_n-\frac{1}{n+1}J_n,
\end{align*}
and thus we have $\widetilde{A}\widetilde{A}^T=I_n$ by direct calculation.
\end{proof}

From the above lemmas, we have the following.
\begin{prop}\label{prop:f0}
\begin{enumerate}
\item $f_0(2n+1)\geq\sqrt{2}f_0(n)$,
\item $f_0(n-1)>f_0(n)-1/\sqrt{2}$.
\end{enumerate}
\end{prop}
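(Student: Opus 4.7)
The two parts have different flavors. Part (i) is a doubling construction using tensor products with a $2\times 2$ Hadamard matrix, while part (ii) is a dimension reduction via Lemma~\ref{lem:n-1} applied to an optimally chosen $A\in\hat O_{n+1}$, with a careful choice of the pivot entry.

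For (i), given $A\in\hat O_{n+1}$ I would form the tensor product $B=A\otimes(H_2/\sqrt 2)$, where $H_2=\bigl(\begin{smallmatrix}1&1\\1&-1\end{smallmatrix}\bigr)$. Since the first column of $H_2/\sqrt 2$ is $(1/\sqrt 2)\allone_2$, the first column of $B$ is the Kronecker product $(1/\sqrt{2n+2})\allone_{2n+2}$, so $B\in\hat O_{2n+2}$; moreover every entry of $B$ has the form $(\pm 1/\sqrt 2)A_{ij}$, so $\|B\|=\|A\|/\sqrt 2$. Lemma~\ref{lem:orth} then gives $f_0(2n+1)\geq 1/(\sqrt 2\|B\|)=1/\|A\|$, and maximising over $A\in\hat O_{n+1}$ yields $f_0(2n+1)\geq\sqrt 2\, f_0(n)$.

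For (ii), set $M=1/(\sqrt 2\, f_0(n))=\min_{A\in\hat O_{n+1}}\|A\|$ and fix a minimiser $A$. The preparatory step is to observe that we may assume $a=A_{12}=M$: indeed $M\geq 1/\sqrt{n+1}$, and since each column of $A$ is a unit vector, some entry of $A_1$ realises $\pm M$; permuting rows, permuting columns $2,\ldots,n+1$, and flipping the sign of column $2$ moves this entry to position $(1,2)$ with positive sign while keeping $A\in\hat O_{n+1}$. Setting $\alpha=\sqrt{n/(n+1)}$ and applying Lemma~\ref{lem:n-1}, each non-first-column entry of $\widetilde A$ is bounded using $|v_i|,|u_j|,|X_{ij}|\leq M$ by
\[
|\widetilde A_{ij}|\leq M+\frac{M(M+\alpha/n)}{\alpha+M},
\]
while the first column of $\widetilde A$ has entries $1/\sqrt n$. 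I would then show both quantities are strictly less than $M/(1-M)$. After clearing denominators and using $\alpha(n+1)/n=1/\alpha$, the bound on $|\widetilde A_{ij}|$ reduces to $M(1-\alpha)+2M^2\alpha>1/(n+1)$, which holds because $2M^2\alpha\geq\sqrt 2/(n+1)>1/(n+1)$ (using $\alpha\geq 1/\sqrt 2$ for $n\geq 1$ and $M^2\geq 1/(n+1)$). The estimate $1/\sqrt n<M/(1-M)$ follows from $M\geq 1/\sqrt{n+1}\geq 1/(1+\sqrt n)$. Combining, $\|\widetilde A\|<M/(1-M)$, and Lemma~\ref{lem:orth} yields
\[
f_0(n-1)\geq\frac{1}{\sqrt 2\,\|\widetilde A\|}>\frac{1-M}{\sqrt 2\,M}=f_0(n)-\frac{1}{\sqrt 2}.
\]

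The main difficulty lies in the choice of pivot in (ii): the naive choice $a=0$ in Lemma~\ref{lem:n-1} produces a bound on $\|\widetilde A\|$ that is only borderline when $M$ is close to its Hadamard lower bound $1/\sqrt{n+1}$, and the desired strict inequality can fail. Choosing $a=M$ instead, so that the denominator $\alpha+a$ in the lemma's formula becomes $\alpha+M$ rather than just $\alpha$, is what ultimately makes the estimate succeed.
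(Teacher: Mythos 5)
Your argument is correct and follows essentially the same route as the paper: part (i) is the identical Kronecker-product construction with the normalized order-$2$ Hadamard matrix combined with Lemma~\ref{lem:orth}, and part (ii) is the same reduction via Lemma~\ref{lem:n-1} followed by an entrywise bound on $\widetilde A$ and another appeal to Lemma~\ref{lem:orth}. The one genuine difference is your normalization $a=\|A\|=M$, versus the paper's weaker assumption $a\geq 0$, which amounts to replacing the denominator $\sqrt{n/(n+1)}+a$ by $\sqrt{n/(n+1)}$; your reduction to $a=M$ is legitimate, since row permutations, permutations of columns $2,\dots,n+1$, and a sign flip of column $2$ all preserve membership in $\hat O_{n+1}$ and the norm, and your subsequent algebra (the reduction to $M(1-\alpha)+2M^2\alpha>1/(n+1)$ and the separate check of the first column against $M/(1-M)$) is sound. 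However, your closing diagnosis is not right: the estimate coming from $a\geq 0$ does \emph{not} fail when $M$ is near $1/\sqrt{n+1}$. With that weaker bound the required inequality becomes $M(1-\alpha)+M^2\alpha>1/(n+1)$, and the paper verifies it by rewriting the relevant numerator as
$$\Bigl(M-\tfrac{1}{\sqrt{n+1}}\Bigr)\Bigl(M+\tfrac{1}{\sqrt{n}}\Bigr)+\Bigl(\sqrt{\tfrac{n+1}{n}}-1\Bigr)\Bigl(1-\tfrac{1}{\sqrt{n+1}}\Bigr)M,$$
whose second summand is strictly positive even at $M=1/\sqrt{n+1}$. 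So your choice $a=M$ buys only a shorter final computation (the term $2M^2\alpha$ alone already exceeds $1/(n+1)$), not the validity of the result.
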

\begin{proof}
(i) Let $B_2=\frac{1}{\sqrt 2}\left(\begin{smallmatrix}1&1\\1&-1\end{smallmatrix}\right)$.
If $A\in\hat{O}_n$,
then $B_2\otimes A\in\hat{O}_{2n}$ and $\|B_2\otimes A\|=\|A\|/\sqrt{2}$.
The result follows from Lemma~\ref{lem:orth}.

(ii) Let $A$ and $\widetilde{A}$ be as in Lemma~\ref{lem:n-1}.
By a suitable change of the
sign of the second column of $A$, we can assume $a\geq 0$.
Since
\begin{align*}
\|\widetilde{A}\|
&=\|X+(\sqrt{\frac{n}{n+1}}+a)^{-1}(-v^T+\frac{1}{\sqrt{n(n+1)}}\allone_n^T)u\|\\
&\leq\|A\|+\sqrt{\frac{n+1}{n}}(\|A\|+\frac{1}{\sqrt{n(n+1)}})\|A\|\\
&=\|A\|(\frac{n+1}{n}+\sqrt{\frac{n+1}{n}}\|A\|),
\end{align*}
and $\|A\|\geq 1/\sqrt{n+1}$, we have
\begin{align*}
&\|\widetilde{A}\|^{-1}-\|A\|^{-1}+1\\
&\geq\frac{1}{\|A\|(\frac{n+1}{n}+\sqrt{\frac{n+1}{n}}\|A\|)}-\frac{1}{\|A\|}+1\\
&=\frac{-\frac{1}{\sqrt{n(n+1)}}+(\sqrt{\frac{n+1}{n}}-1)\|A\|+\|A\|^2}
{\|A\|(\sqrt{\frac{n+1}{n}}+\|A\|)}\\
&=\frac{(-\frac{1}{\sqrt{n+1}}+\|A\|)(\frac{1}{\sqrt n}+\|A\|)
+(\sqrt{\frac{n+1}{n}}-1)(1-\frac{1}{\sqrt{n+1}})\|A\|}
{\|A\|(\sqrt{\frac{n+1}{n}}+\|A\|)}\\
&>0,
\end{align*}
and thus the result follows from Lemma~\ref{lem:orth}.
\end{proof}

\begin{thm}\label{thm:1}
Let $k\geq 1$. If there exists a Hadamard matrix of order $n+k$, we have
$$f_0(n)\geq\frac{\sqrt{n+k}-k+1}{\sqrt 2},$$
and equality holds if and only if $k=1$.
In particular, if there exists a Hadamard matrix of order $4m$ for any $m\leq M$,
then $f_0(n)>(\sqrt{n+4}-3)/\sqrt 2$ holds for any $n\leq 4M-1$.
\end{thm}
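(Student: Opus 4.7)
The plan is to combine Schoenberg's equality for Hadamard matrices (the equality case of (\ref{eq:upper})) with iterated application of the strict inequality in Proposition~\ref{prop:f0}(ii). A Hadamard matrix of order $n+k$ yields, via (\ref{eq:upper}), the equality $f_0(n+k-1)=\sqrt{(n+k)/2}$. Starting from this anchor, I apply Proposition~\ref{prop:f0}(ii) exactly $k-1$ times, descending through the dimensions $n+k-1,n+k-2,\dots,n+1$, to obtain
\begin{equation*}
f_0(n)\geq f_0(n+k-1)-\frac{k-1}{\sqrt 2}=\frac{\sqrt{n+k}-k+1}{\sqrt 2}.
\end{equation*}
For $k=1$ no descent is taken and the bound collapses to the Schoenberg equality, so equality holds. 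For $k\geq 2$ at least one descent step is used, and since Proposition~\ref{prop:f0}(ii) is strict, so is the resulting inequality. This simultaneously establishes the main estimate and the ``equality iff $k=1$'' clause.

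For the ``in particular'' statement, I observe that for every $n\leq 4M-1$ the unique $k\in\{1,2,3,4\}$ with $4\mid n+k$ satisfies $n+k\leq 4M$; a brief check on $n\bmod 4$ confirms this, the tightest case being $n\equiv 3\pmod 4$ with $k=1$ and $n+1=4M$. Hence a Hadamard matrix of order $n+k=4m$ with $m\leq M$ is available, and the first part of the theorem yields $f_0(n)\geq g(k)/\sqrt 2$ with $g(k):=\sqrt{n+k}-k+1$. Since $g'(k)=1/(2\sqrt{n+k})-1<0$, $g$ is strictly decreasing, so $g(k)\geq g(4)=\sqrt{n+4}-3$ with equality only at $k=4$. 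In each residue class modulo $4$, at least one of the two inequalities $f_0(n)\geq g(k)/\sqrt 2$ and $g(k)\geq g(4)$ is strict — the first when $k\geq 2$, the second when $k<4$ — giving $f_0(n)>(\sqrt{n+4}-3)/\sqrt 2$.

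The main concern is essentially bookkeeping: correctly matching the $k-1$ iterations of Proposition~\ref{prop:f0}(ii) to the $(k-1)/\sqrt 2$ drop from the Schoenberg anchor, preserving strictness exactly where needed in the chain, and exhausting the four residues modulo $4$ in the second clause. No new construction beyond the Hadamard anchor and the descent Lemma~\ref{lem:n-1} is required.
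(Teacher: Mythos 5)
Your proposal is correct and follows essentially the same route as the paper: anchor at $f_0(n+k-1)=\sqrt{(n+k)/2}$ via the Schoenberg equality case, descend $k-1$ times using the strict inequality of Proposition~\ref{prop:f0}(ii), and use the monotonicity of $\sqrt{n+k}-k+1$ in $k$ for the ``in particular'' clause. The paper's proof is just a terser version of the same argument, and your strictness bookkeeping (first inequality strict when $k\geq 2$, second strict when $k<4$) is the correct way to fill in its details.
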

\begin{proof}
The former statement is a direct consequence of Proposition~\ref{prop:f0} (ii).
Since $\sqrt{n+k}-k+1$ is decreasing on $k$, the latter statement follows.
\end{proof}

If the Hadamard conjecture is true, we have
$f_0(n)>(\sqrt{n+4}-3)/\sqrt 2$ for any $n$.
This lower bound is close to the upper bound (\ref{eq:upper})
if $n$ is sufficiently large.
Even if we do not assume the Hadamard conjecture, we can estimate a lower bound of $f_0(n)$
by Proposition~\ref{prop:f0}.
\begin{lem}\label{lem:lower}
Let $N$ be a positive integer.
If $f_0(n)\geq c\sqrt{n/2}$ holds for any $n$ with $N\leq n\leq 2N-1$,
then $f_0(n)>(c-(1+\sqrt{2})/\sqrt{N})\sqrt{n/2}$ for any $n\geq N$. 
\end{lem}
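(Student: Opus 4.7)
The plan is to prove by induction on $n \geq N$ the strengthened bound
\[
f_0(n) \geq c'\sqrt{n/2} + \mu, \quad \text{where } c' = c-\frac{1+\sqrt{2}}{\sqrt N} \text{ and } \mu = 1+\frac{1}{\sqrt 2},
\]
which in particular gives the strict inequality $f_0(n) > c'\sqrt{n/2}$ since $\mu > 0$. Carrying the additive constant $\mu$ through the induction is essential: Proposition~\ref{prop:f0}(ii) loses exactly $1/\sqrt 2$ per application, a loss that cannot be absorbed into a bound of the form $c'\sqrt{n/2}$ alone.

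Two algebraic identities do all the work: first, $(\sqrt 2-1)\mu = 1/\sqrt 2$, equivalently $\sqrt 2\,\mu - 1/\sqrt 2 = \mu$; and second, $(1+\sqrt 2)/\sqrt 2 = \mu$. The second identity handles the base case $n\in[N,2N-1]$: from the hypothesis $f_0(n)\ge c\sqrt{n/2}$ and $\sqrt{n/N}\ge 1$, I compute $(c-c')\sqrt{n/2} = (1+\sqrt 2)\sqrt{n/(2N)} = \mu\sqrt{n/N} \ge \mu$, so $f_0(n)\ge c'\sqrt{n/2} + \mu$.

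For the inductive step $n \geq 2N$, I split by parity. If $n = 2n'$ is even then $n' \geq N$, and combining Proposition~\ref{prop:f0}(i) and (ii) gives $f_0(n) > \sqrt 2 f_0(n') - 1/\sqrt 2$; the inductive hypothesis at $n'$ together with the first identity makes this exactly $> c'\sqrt{n/2} + \mu$. If $n = 2n'+1$ is odd then $n' \geq N$, and Proposition~\ref{prop:f0}(i) alone gives $f_0(n) \geq c'\sqrt{(n-1)/2} + \sqrt 2\,\mu$; closing this requires bounding $c'(\sqrt{n/2} - \sqrt{(n-1)/2}) = c'/(2(\sqrt{n/2}+\sqrt{(n-1)/2}))$ by $(\sqrt 2-1)\mu = 1/\sqrt 2$, which reduces to $\sqrt n + \sqrt{n-1}\ge c'$. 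Since the upper bound (\ref{eq:upper}) forces $c \leq \sqrt{(N+1)/N} \leq \sqrt 2$ (and hence $c' < \sqrt 2$), this is immediate for $n \geq 2N+1$.

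The main obstacle is recognizing that a naive induction with the target $c'\sqrt{n/2}$ does not close, and finding the right strengthened hypothesis. The fortunate coincidence is that the unique $\mu$ making the even step close with equality is exactly the amount of extra slack furnished by the hypothesis at the endpoint $n = N$---precisely because $(1+\sqrt 2)/\sqrt 2 = 1 + 1/\sqrt 2$.
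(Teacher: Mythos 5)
Your proof is correct and takes essentially the same route as the paper: both arguments propagate the hypothesis on $[N,2N-1]$ upward via the two recursions of Proposition~\ref{prop:f0} (using (ii) after (i) in the even case), telescope the $1/\sqrt{2}$ losses into a geometric series, and invoke $c\le\sqrt{(N+1)/N}\le\sqrt{2}$ from (\ref{eq:upper}) to absorb the $\sqrt{(n-1)/2}$ versus $\sqrt{n/2}$ discrepancy in the odd case. The only difference is bookkeeping: the paper inducts on the dyadic level $k$ with the hypothesis $f_0(n)\ge c\sqrt{n/2}-(\sqrt{2}^{k}-1)/(2-\sqrt{2})$ for $2^kN\le n\le 2^{k+1}N-1$, which, since $\mu=1/(2-\sqrt{2})$ and $\sqrt{2}^{k}\le\sqrt{n/N}$, is your strengthened hypothesis $f_0(n)\ge c'\sqrt{n/2}+\mu$ in disguise.
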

\begin{proof}
First, we show that
\begin{equation}\label{eq:ind}
f_0(n)\geq c\sqrt{\frac{n}{2}}-\frac{\sqrt{2}^{k}-1}{2-\sqrt{2}}
\end{equation}
holds for any $n$ and $k$ with $2^kN\leq n\leq 2^{k+1}N-1$, $k\in\Z_{\geq 0}$.
If (\ref{eq:ind}) holds for some $n$ and $k$,
then by Proposition~\ref{prop:f0}, we have
$$f_0(2n)>\sqrt{2}f_0(n)-\frac{1}{\sqrt 2}\geq c\sqrt{n}-\frac{\sqrt{2}^{k+1}-1}{2-\sqrt{2}}$$
and
\begin{align*}
f_0(2n+1)&=\sqrt{2}f_0(n)\geq c\sqrt{n}-\frac{\sqrt{2}^{k+1}-\sqrt{2}}{2-\sqrt{2}}\\
&=c\sqrt{\frac{2n+1}{2}}-\frac{\sqrt{2}^{k+1}-1}{2-\sqrt{2}}
+c(\sqrt{n}-\sqrt{\frac{2n+1}{2}})+\frac{1}{\sqrt 2}.
\end{align*}
Since $c\leq\sqrt{(n+1)/n}$ by (\ref{eq:upper}), we can derive
$c(\sqrt{n}-\sqrt{(2n+1)/2})+1/\sqrt 2>0$ for $n\geq 1$.
Thus, by induction on $k$, (\ref{eq:ind}) holds
for any $n$ and $k$ with
$2^kN\leq n\leq 2^{k+1}N-1$, $k\in\Z_{\geq 0}$. Therefore
\begin{align*}
\sqrt{\frac{2}{n}}f_0(n)
&\geq c-\frac{1}{\sqrt n}\frac{\sqrt{2}^{k}-1}{\sqrt{2}-1}\\
&>c-\frac{1}{\sqrt{2^kN}}\frac{\sqrt{2}^{k}}{\sqrt{2}-1}\\
&=c-\frac{1+\sqrt{2}}{\sqrt{N}}.
\end{align*}
\end{proof}

\begin{proof}[Proof of Theorem~\ref{thm:main}]
The smallest order for which no Hadamard matrix is presently known is 668
\cite{CRC,KT}.
Thus $f_0(n)>(\sqrt{n+4}-3)/\sqrt{2}$ holds for any $n\leq 663$ by Theorem~\ref{thm:1}.
Since $(\sqrt{n+4}-3)/\sqrt{n}$ is a increasing function,
we can set $N=332$ and $c=(\sqrt{336}-3)/\sqrt{332}$ in Lemma~\ref{lem:lower}.
Then we have
$$\frac{f_0(n)}{\sqrt{n}}>\frac{c}{\sqrt 2}-\frac{(1+\sqrt{2})}{\sqrt{2N}}
=\frac{\sqrt{336}-4-\sqrt{2}}{\sqrt{664}}$$
for $n\geq 332$.
For $n<332$, we have
$f_0(n)/\sqrt{n}\geq\max\{\sqrt{n+1}/2\sqrt{n},(\sqrt{n+4}-3)/\sqrt{2n}\}$
by Proposition~\ref{prop:Fourier} and Theorem~\ref{thm:1}.
This lower bound exceeds $(\sqrt{336}-4-\sqrt{2})/\sqrt{664}$. %for $n\geq 93$.
\end{proof}

\end{document}